\newtheorem{thm}{Theorem}
\newtheorem{prop}[thm]{Proposition}
\theoremstyle{remark}
\newcommand{\FF}{\mathbb{F}}
\newcommand{\ww}{\omega}
\newcommand{\vv}{\bar{\omega}}
\DeclareMathOperator{\wt}{wt}
\DeclareMathOperator{\supp}{supp}
\begin{document}
\title{New quantum codes constructed from some
self-dual additive $\FF_4$-codes}

\author{
Masaaki Harada\thanks{
Research Center for Pure and Applied Mathematics,
Graduate School of Information Sciences,
Tohoku University, Sendai 980--8579, Japan.
email: mharada@m.tohoku.ac.jp.}
}

\maketitle

\begin{abstract}
For $(n,d)= (66,17),(78,19)$ and $(94,21)$, 
we construct quantum $[[n,0,d]]$ codes 
which improve the previously known lower bounds on 
the largest minimum weights among
quantum codes with these parameters.
These codes are constructed from 
self-dual additive $\FF_4$-codes based on 
pairs of circulant matrices.
\end{abstract}

\noindent
{\bf Keywords:}  
quantum code, self-dual additive $\FF_4$-code, minimum weight,
circulant matrix

\section{Introduction}

Let $\FF_4=\{ 0,1,\ww , \vv  \}$ be the finite field of order
$4$, where $\vv=  \omega^2 = \omega +1$.
An {\em additive} $\FF_4$-code $C$ of length $n$
is an additive subgroup of $\FF_4^n$.
An additive $(n,2^k)$ $\FF_4$-code is an additive $\FF_4$-code of
length $n$ with $2^k$ codewords.
The weight $\wt(x)$ of a vector $x \in \FF_4^n$ is
the number of non-zero components of $x$.
The minimum non-zero weight of all codewords in $C$ is called
the {\em minimum weight} of $C$.
The {\em dual} code $C^*$ of the additive $\FF_4$-code $C$ of length $n$
is defined as
$\{x \in \FF_4^n \mid x * y = 0 \text{ for all } y \in C\}$
under the trace inner product
$x * y=\sum_{i=1}^n (x_iy_i^2+x_i^2y_i)$
for $x=(x_1,x_2,\ldots,x_n)$, $y=(y_1,y_2,\ldots,y_n) \in
\FF_4^n$.
An additive $\FF_4$-code $C$ is called {\em self-orthogonal}
(resp.\ {\em self-dual}) if
$C \subset {C}^*$ (resp.\ $C = {C}^*$).

A useful method for constructing quantum codes from self-orthogonal 
additive $\FF_4$-codes was given by Calderbank, Rains, Shor and 
Sloane~\cite{CRSS} (see~\cite{CRSS} for 
undefined terms concerning quantum codes).
A self-orthogonal additive $(n,2^{n-k})$ $\FF_4$-code $C$
such that there is no vector of weight less than $d$ in $C^* \setminus C$,
gives a quantum $[[n,k,d]]$ code, where $k \ne 0$.
A self-dual additive $\FF_4$-code of length $n$ and
minimum weight $d$ gives a quantum $[[n,0,d]]$ code.
Let $d_{\max}(n,k)$ denote the largest minimum weight $d$
among quantum $[[n,k,d]]$ codes.
It is a fundamental problem to determine $d_{\max}(n,k)$.
A table on $d_{\max}(n,k)$ is given in~\cite[Table III]{CRSS} for $n \le 30$.
An extended table is obtained  electronically from~\cite{Grassl}.

The main aim of this note is to show the following:

\begin{thm}\label{thm}
There is a quantum $[[n,0,d]]$ code for
$(n,d)= (66,17),(78,19)$ and  $(94,21)$.
\end{thm}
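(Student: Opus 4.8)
The plan is to apply the correspondence of Calderbank, Rains, Shor and Sloane~\cite{CRSS} recalled above: a self-dual additive $\FF_4$-code of length $n$ and minimum weight $d$ gives a quantum $[[n,0,d]]$ code, so it suffices to construct, for each of the three pairs $(n,d)$, a self-dual additive $\FF_4$-code of length $n$ whose minimum weight is $d$. Comparison with \cite{Grassl} then shows that each such code improves the best previously recorded lower bound on $d_{\max}(n,0)$, which is the content of Theorem~\ref{thm}.

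To generate candidates I would use the standard fact that every self-dual additive $\FF_4$-code of length $n$ is equivalent to one with generator matrix $\ww I_n + A$ over $\FF_4$, where $A$ is a symmetric binary matrix with zero diagonal, and that conversely any such $A$ yields a self-dual code. Indeed the $n$ rows of $\ww I_n + A$ are linearly independent over $\FF_2$ (the coefficient of $\ww$ in row $i$ is $e_i$); every row $x$ is isotropic for the trace inner product since $x * x = \sum_k 2 x_k^3 = 0$; and distinct rows $r_i, r_j$ satisfy $r_i * r_j = A_{ij} + A_{ji} = 0$ because $A = A^{\mathsf T}$ has zero diagonal. Hence the code is self-orthogonal of size $2^n$, that is, self-dual. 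Writing $n = 2m$ (so $m = 33, 39, 47$), I would restrict to $A$ of the block-circulant form
\[
A = \begin{pmatrix} R & S \\ S^{\mathsf T} & R \end{pmatrix},
\]
with $R$ and $S$ circulant $m \times m$ matrices and $R$ symmetric with zero diagonal; such an $A$ is again symmetric with zero diagonal, and the resulting code is determined by just the first rows of $R$ and $S$. This is the announced construction from a pair of circulant matrices, and it shrinks the search from all graphs on $n$ vertices to roughly $2^{m}$ choices.

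The remaining work is computational: (i) run over a suitable (restricted or randomly sampled) family of pairs of circulants; (ii) for each, compute the minimum weight of the associated code; (iii) output, for $n = 66, 78, 94$, one pair attaining $d = 17, 19, 21$ respectively, and record its defining rows so that the codes can be reconstructed and independently verified. I expect step~(ii) to be the principal obstacle: the codes have $2^{66}$, $2^{78}$ and $2^{94}$ codewords, so brute-force enumeration is hopeless, and the minimum weight must instead be obtained indirectly, for example by identifying the additive $\FF_4$-code of length $n$ with its image binary $[2n, n]$ code under a fixed $\FF_2$-basis of $\FF_4$ and computing the corresponding symplectic (``quantum'') weight by a Brouwer--Zimmermann-type algorithm, as implemented in Magma. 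A secondary difficulty is that $2^{m}$ is itself too large to exhaust when $m = 39, 47$, so the search must be restricted (for instance to circulants whose defining vectors have prescribed weight) or randomized; it is precisely the circulant structure that makes a successful search realistic at these lengths.
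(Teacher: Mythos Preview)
Your proposal is correct and follows essentially the same route as the paper: reduce to constructing self-dual additive $\FF_4$-codes via the graph form $\ww I + A$, specialize $A$ to the block form $M(R,S)$ built from a pair of circulants, search (non-exhaustively at these lengths) and verify minimum weights with {\sc Magma}, then invoke~\cite{CRSS}. The paper's added content is precisely the explicit first rows $\supp(r_A)$, $\supp(r_B)$ for $n=66,78,94$ that your step~(iii) promises to output.
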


The above quantum $[[n,0,d]]$ codes are constructed from 
self-dual additive $\FF_4$-codes of
length $n$ and minimum weight $d$ for
the above $(n,d)$.
These quantum codes 
improve the previously known lower bounds on 
$d_{\max}(n,0)$ for the above $n$.

All computer calculations in this note
were done with the help of {\sc Magma}~\cite{Magma}.

\section{Self-dual additive $\FF_4$-codes and graphs}

\subsection{Self-dual additive $\FF_4$-codes and graphs}

A {\em graph} $\Gamma$ consists of a finite set $V$ of vertices together with
a set of edges, where an edge is a subset of $V$ of cardinality $2$.
All graphs in this note are simple, that is, 
graphs are undirected without loops and multiple edges.
The {\em adjacency matrix} of a graph $\Gamma$ with 
$V=\{x_1,x_2,\ldots,x_v\}$ is a $v \times v$ matrix $A=(a_{ij})$, where 
$a_{ij}=1$ if $\{x_i,x_j\}$ is an edge and $a_{ij}=0$ otherwise.

Let $\Gamma$ be a graph and let $A_\Gamma$ be an adjacency matrix of $\Gamma$.
Let $C(\Gamma)$ denote the additive 
$\FF_4$-code generated by the rows of $A_\Gamma+ \ww I$, where
$I$ denotes the identity matrix.
Then  $C(\Gamma)$ is self-dual~\cite{DP06}.
In addition, it was shown in~\cite{DP06} that
for any self-dual additive $\FF_4$-code $C$,
there is a graph $\Gamma$ such that $C(\Gamma)$ is equivalent to $C$
(see~\cite{CRSS} for the definition of equivalence of codes).
%
%
Hence, 
for constructing self-dual additive $\FF_4$-codes,
it is sufficient to consider only matrices
$A + \ww I$, where
$A$ are symmetric $(1,0)$-matrices with the diagonal entries $0$.
Using this, a classification of self-dual additive $\FF_4$-codes
was done for lengths up to $12$~\cite[Section 5]{DP06}.

\subsection{Self-dual additive $\FF_4$-codes based on circulant matrices}

An  $n \times n$ matrix is {\em circulant} if
it has the following form:
\[
\left( \begin{array}{ccccc}
r_0     &r_1     & \cdots &r_{n-2}&r_{n-1} \\
r_{n-1}&r_0     & \cdots &r_{n-3}&r_{n-2} \\
r_{n-2}&r_{n-1}& \ddots &r_{n-4}&r_{n-3} \\
\vdots  & \vdots &\ddots& \ddots & \vdots \\
r_1    &r_2    & \cdots&r_{n-1}&r_0
\end{array}
\right).
\]
In~\cite{GH} and \cite{Var}, 
self-dual additive $\FF_4$-codes 
of length $n$ having generator matrices $A+ \omega I$
were considered for
symmetric circulant matrices $A$ with the diagonal entries $0$.
In this note, 
we concentrate on (adjacency) matrices of the following form:
\begin{equation} \label{eq:GM}
M(A,B)=
\left(
\begin{array}{cc}
A & B \\
B^T & A
\end{array}
\right),
\end{equation}
where $A$ are $n \times n$ symmetric circulant $(1,0)$-matrices
with the diagonal entries $0$ and 
$B$ are $n \times n$ circulant $(1,0)$-matrices.
Then we define self-dual additive $\FF_4$-codes $C(A,B)$ 
of length $2n$ having generator matrices $M(A,B)+ \omega I$,
where $M(A,B)$ have the form~\eqref{eq:GM}.
We remark that a different method for constructing
self-dual additive $\FF_4$-codes based on pairs of circulant matrices
was given in~\cite{GK}.

A self-dual additive $\FF_4$-code is called
{\em Type~II} if it is even.  
It is known that a Type~II additive $\FF_4$-code must
have even length.
A self-dual additive $\FF_4$-code, which is not Type~II,
is called {\em Type~I\@.}
Although the following proposition is somewhat trivial, 
we give a proof for completeness.

\begin{prop}\label{prop}
Let $C(A,B)$ be a self-dual additive 
$\FF_4$-code of length $2n$ 
generated by the rows of $M(A,B)+\ww I$.
Let $r_A$ and $r_B$ denote the first rows of $A$ and $B$,
respectively.
\begin{enumerate}
\renewcommand{\labelenumi}{\rm \arabic{enumi})}
\item Suppose that $n$ is even.
Then $C(A,B)$ is Type~II if and only if $w+\wt(r_B)$ is odd,
where $w$ denotes the weight of the $(n/2+1)$st coordinate of $r_A$.
\item Suppose that $n$ is odd.
Then $C(A,B)$ is Type~II if and only if $\wt(r_B)$ is odd.
\end{enumerate}
\end{prop}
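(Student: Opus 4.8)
The plan is to reduce the Type~II condition to a parity count on the rows of the generator matrix $M(A,B)+\ww I$. First I would record an elementary identity for the weight function: for $x,y\in\FF_4^n$,
\[
\wt(x+y)\equiv \wt(x)+\wt(y)+x*y \pmod 2 .
\]
This follows by inspecting, coordinate by coordinate, the possibilities for the pair $(x_i,y_i)$ and noting that $x_iy_i^2+x_i^2y_i$ equals $1$ precisely when $x_i$ and $y_i$ are both nonzero and distinct (equivalently, that the trace $\alpha+\alpha^2$ is $1$ only on $\{\ww,\vv\}$), while in that same case position $i$ drops out of $\supp(x+y)$ relative to $\supp(x)\cup\supp(y)$. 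In particular, on a self-orthogonal additive $\FF_4$-code the weight is additive modulo $2$, so such a code is even if and only if \emph{every codeword in a generating set} has even weight. Since $C(A,B)$ is self-dual, it is therefore Type~II if and only if every one of the $2n$ rows of $M(A,B)+\ww I$ has even weight.

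Next I would compute those row weights, which is where the circulant structure enters. Writing $M(A,B)+\ww I=\left(\begin{smallmatrix} A+\ww I & B\\ B^T & A+\ww I\end{smallmatrix}\right)$, note that because $A$ has zero diagonal, the $i$th row of $A+\ww I$ carries an entry $\ww$ in position $i$ and otherwise agrees with the $i$th row of $A$; since $A$ is circulant every such row has weight $1+\wt(r_A)$. The $i$th row of $B$ has weight $\wt(r_B)$, and the $i$th row of $B^T$—itself circulant—has weight equal to a column weight of $B$, again $\wt(r_B)$. Hence every row of $M(A,B)+\ww I$ has weight $1+\wt(r_A)+\wt(r_B)$, and so $C(A,B)$ is Type~II if and only if $\wt(r_A)+\wt(r_B)$ is odd.

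Finally I would evaluate $\wt(r_A)$ modulo $2$ using that $A$ is \emph{symmetric} circulant. Writing $r_A=(r_0,r_1,\dots,r_{n-1})$ with $r_0=0$, symmetry forces $r_i=r_{n-i}$ for all indices mod $n$. When $n$ is odd the set $\{1,\dots,n-1\}$ splits into $(n-1)/2$ pairs $\{i,n-i\}$ with $i\neq n-i$, so $\wt(r_A)$ is even and the condition above reduces to ``$\wt(r_B)$ is odd'', which is 2). When $n$ is even the only index fixed by $i\mapsto n-i$ is $i=n/2$, so $\wt(r_A)\equiv r_{n/2}\pmod 2$; as $r_{n/2}$ is exactly the $(n/2+1)$st coordinate $w$ of $r_A$, the condition reduces to ``$w+\wt(r_B)$ is odd'', which is 1).

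The one genuinely substantive point is the weight‑parity identity of the first paragraph together with the observation that it makes the even/odd distinction $\FF_2$-linear on a self-orthogonal code; granting that, the remainder is a direct count relying only on $A$, $B$, $B^T$ being circulant and on the diagonal of $A$ vanishing, so I do not anticipate any real obstacle beyond careful bookkeeping of the coordinate indexing in the $n$ even case.
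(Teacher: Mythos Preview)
Your argument is correct. Both you and the paper reduce the question to whether $\wt(r_A)+\wt(r_B)$ is odd and then finish with the same parity analysis of $\wt(r_A)$ coming from the symmetry of the circulant $A$. The difference lies in how that reduction is reached: the paper invokes the graph viewpoint and quotes \cite[Theorem~15]{DP06} to say that $C(\Gamma)$ is Type~II if and only if every vertex of $\Gamma$ has odd degree, whence the common degree $\wt(r_A)+\wt(r_B)$ must be odd; you instead prove from scratch the identity $\wt(x+y)\equiv\wt(x)+\wt(y)+x*y\pmod 2$, deduce that on a self-orthogonal code evenness is $\FF_2$-linear and hence testable on generators, and then compute each row weight of $M(A,B)+\ww I$ as $1+\wt(r_A)+\wt(r_B)$. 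Your route is a bit longer but entirely self-contained, and in fact reproves the cited Danielsen--Parker criterion in this special case (row weight $1+\deg(v)$ even $\Leftrightarrow$ $\deg(v)$ odd); the paper's route is shorter precisely because it outsources that step.
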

\begin{proof}
Let $\Gamma$ be the graph with adjacency matrix $M(A,B)$.
Since $A$ and $B$ are circulant, 
the degrees of the vertices of $\Gamma$ are equal to 
$\wt(r_A)+\wt(r_B)$.
By Theorem~15 in~\cite{DP06}, the codes $C(A,B)$ are
Type~II if and only if all the vertices of $\Gamma$ have odd degree.
Since $A$ is symmetric, 
$\wt(r_A) \equiv w \pmod 2$ if $n$ is even, and
$\wt(r_A)$ is even if $n$ is odd.
The results follow.
\end{proof}

\section{Self-dual additive $\FF_4$-codes $C(A,B)$}
For lengths $n=14,16,\ldots, 40$,
by exhaustive search, we found all distinct
self-dual additive $\FF_4$-codes $C(A,B)$ with
generator matrices $M(A,B) + \ww I$.
Then we determined the largest minimum weight $d_{\max}(n)$ 
among all self-dual additive $\FF_4$-codes $C(A,B)$ 
for these lengths.
This computation was done by 
the {\sc Magma} function {\tt MinimumWeight}.
We denote by $d_{\max}(n)$ the largest minimum weight
among all self-dual additive $\FF_4$-codes $C(A,B)$ of length $n$.
In Table~\ref{Tab}, we list the values $d_{\max}(n)$
for $n=14,16,\ldots,40$.
Our present state of knowledge about 
$d_{\max}(n,0)$ is also listed in the table.
For these lengths, 
the self-dual additive $\FF_4$-codes give
quantum $[[n,0,d]]$ codes
such that $d=d_{\max}(n,0)$ or $d$ 
attains the known lower bound on $d_{\max}(n,0)$
by the method in~\cite{CRSS}.
An example of self-dual additive $\FF_4$-codes 
$C(A,B)$
having minimum weight $d_{\max}(n)$ is given in 
Table~\ref{Tab:C}, where the supports $\supp(r_A)$
(resp.\ $\supp(r_B)$) of the first rows 
of matrices $A$ (resp.\ $B$) are listed.
By Proposition~\ref{prop},
$C_{n,I}$ are Type~I 
$(n=16,18,\ldots,28,32,34,40)$, and
$C_{n,II}$ are Type~II 
$(n=14,16,\ldots,40)$.
Our computer search shows
that the largest minimum weights among all
Type~I self-dual additive $\FF_4$-codes of lengths $14$,
$30$, $36$ and $38$ are $5$, $9$, $11$ and $11$, respectively.

\begin{table}[thb]
\caption{$d_{\max}(n)$, $d'_{\max}(n)$ and $d_{\max}(n,0)$}
\label{Tab}
\begin{center}
{\small
\begin{tabular}{c|c|c||c|c} 
\noalign{\hrule height0.8pt}
$n$ & $d_{\max}(n)$ & Codes & $d'_{\max}(n)$ & $d_{\max}(n,0)$ \\
\hline
14& 6&$C_{14,II}$         &  6 & 6\\
16& 6&$C_{16,I},C_{16,II}$&  6 & 6\\
18& 6&$C_{18,I},C_{18,II}$&  6 & 8\\
20& 8&$C_{20,I},C_{20,II}$&  8 & 8\\
22& 8&$C_{22,I},C_{22,II}$&  8 & 8\\
24& 8&$C_{24,I},C_{24,II}$&  8 & 8--10\\
26& 8&$C_{26,I},C_{26,II}$&  8 & 8--10\\
28&10&$C_{28,I},C_{28,II}$& 10 & 10\\
30&12&$C_{30,II}$         & 12 & 12\\
32&10&$C_{32,I},C_{32,II}$& 10 & 10--12\\
34&10&$C_{34,I},C_{34,II}$& 10 & 10--12\\
36&12&$C_{36,II}$         & 11 & 12--14\\
38&12&$C_{38,II}$         & 12 & 12--14\\
40&12&$C_{40,I},C_{40,II}$& 12 & 12--14\\
\noalign{\hrule height0.8pt}
\end{tabular}
}
\end{center}
\end{table}

\begin{table}[thb]
\caption{Self-dual additive $\FF_4$-codes with minimum weights $d_{\max}(n)$}
\label{Tab:C}
\begin{center}
{\footnotesize
\begin{tabular}{c|c|l|l} 
\noalign{\hrule height0.8pt}
Codes & $(n,d)$ & \multicolumn{1}{c|}{$\supp(r_A)$}
& \multicolumn{1}{c}{$\supp(r_B)$} \\
\hline
$C_{14,II}$& $(14, 6)$ & $\{2,7\}$ & $\{1,2,5\}$ \\
$C_{16,I}$ & $(16, 6)$ & $\{2,8\}$&$\{1,2,3,4,5,6\}$ \\
$C_{16,II}$& $(16, 6)$ & $\{2,8\}$&$\{1,2,5\}$ \\
$C_{18,I}$ & $(18, 6)$ & $\{2,9\}$ & $\{1,2,4,5\}$ \\
$C_{18,II}$& $(18, 6)$ & $\emptyset$ & $\{1,2,3,4,7\}$ \\
$C_{20,I}$ & $(20, 8)$ & $\{2,10\}$ & $\{1,2,3,4,5,7,8,9\}$ \\
$C_{20,II}$& $(20, 8)$ & $\{3,9\}$ & $\{1,2,3,6,7\}$ \\
$C_{22,I}$ & $(22, 8)$ & $\{2,3,10,11\}$ & $\{1,2,5,6,7,9 \}$ \\
$C_{22,II}$& $(22, 8)$ & $\{2,11\}$ & $\{1,2,4,7,9\}$ \\
$C_{24,I}$ & $(24, 8)$ & $\{2,12\}$ & $\{1,2,3,5,6,7,9,10\}$ \\
$C_{24,II}$& $(24, 8)$ & $\emptyset$ & $\{1,2,4,5,6,7,9\}$ \\
$C_{26,I}$ & $(26, 8)$ & $\{2,13\}$ & $\{1,2,3,4,5,7\}$ \\
$C_{26,II}$& $(26, 8)$ & $\{2,13\}$ & $\{1,2,4,6,7\}$ \\
$C_{28,I}$ & $(28,10)$ & $\{2,3,4,5,8,11,12,13,14\}$ &
	     $\{1,2,4,7,8,10,12\}$ \\
$C_{28,II}$& $(28,10)$ & $\{2,14\}$ & $\{1,2,4,5,7,10,12\}$ \\
$C_{30,II}$& $(30,12)$ & $\{2,3,5,7,10,12,14,15\}$ & $\{1,2,4,5,6,7,9,10,13\}$ \\
$C_{32,I}$ & $(32,10)$ & $\{2,16\}$ & $\{1,2,4,5,6,7,8,10\}$ \\
$C_{32,II}$& $(32,10)$ & $\{2,16\}$ & $\{1,2,3,5,7,8,10\}$ \\
$C_{34,I}$ & $(34,10)$ & $\{2,17\}$ & $\{1,2,4,6,7,8,9,11\}$ \\
$C_{34,II}$& $(34,10)$ & $\{2,17\}$ & $\{1,2,3,4,6,7,9\}$ \\
$C_{36,II}$& $(36,12)$ & $\{2,4,5,6,14,15,16,18\}$ & $\{1,2,4,5,7,8,9,10,11,14,15\}$ \\
$C_{38,II}$& $(38,12)$ & $\{2,19\}$ & $\{1,2,4,5,6,8,11,13,14\}$ \\
$C_{40,I}$&  $(40,12)$ & $\{2,3,19,20\}$&$\{1,2,4,6,8,9,10,15\}$\\
$C_{40,II}$& $(40,12)$ & $\{2,20\}$&$\{1,2,4,5,6,8,9,10,13\}$\\
\noalign{\hrule height0.8pt}
\end{tabular}
}
\end{center}
\end{table}

As described above, 
self-dual additive $\FF_4$-codes having generator matrices $A+ \omega I$
were considered for circulant matrices $A$ with the diagonal entries 
$0$~\cite{GH} and \cite{Var}.
The largest minimum weight $d'_{\max}(n)$ among such codes 
was determined for lengths up to $50$~\cite{GH} and \cite{Var}.
The values $d_{\max}(n)$ and $d'_{\max}(n)$
are also listed in Table~\ref{Tab},
to compare the values $d_{\max}(n)$ and $d'_{\max}(n)$.
We remark that $d_{\max}(36)> d'_{\max}(36)$.

\section{New self-dual additive $\FF_4$-codes}

For lengths $n \ge 41$, 
by non-exhaustive search, 
we tried to find self-dual additive $\FF_4$-codes
$C(A,B)$ with large minimum weight.
 
The self-dual additive $\FF_4$-code $C_{66}=C(A,B)$ is
defined as the code with generator matrix $M(A,B) + \ww I$,
where the supports $\supp(r_A)$ and $\supp(r_B)$ 
are as follows:
\begin{align*}
&
\{2,3,4,5,6,8,12,13,14,16,17,18,19,21,22,23,27,29,30,31,32,33\},
\\&
\{3,4,5,8,10,11,12,16,20,21,25,26,28,29,30,33\},
\end{align*}
respectively.
We verified that $C_{66}$ has minimum weight $17$.
This computation was done by 
the {\sc Magma} function {\tt MinimumWeight}.
We also verified that $C_{66}$ has no codeword of weight less than $17$,
by using the {\sc Magma} function {\tt VerifyMinimumWeightUpperBound}.
Hence, we have the following:

\begin{prop}\label{prop:66}
There is a self-dual additive $\FF_4$-code of length
$66$ and minimum weight $17$.
\end{prop}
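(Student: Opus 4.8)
The plan is to take the explicit code $C_{66}=C(A,B)$ described above, note that it is self-dual by the construction of Section~2, and then determine its minimum weight to be exactly $17$ through two computations: one producing a codeword of weight $17$ and one certifying that no nonzero codeword has smaller weight.

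First I would check that the given data genuinely defines a code of the type in~\eqref{eq:GM} with $n=33$: one verifies that $\supp(r_A)$ is symmetric in the circulant sense and omits the index~$1$, so that $A$ is a symmetric circulant $(1,0)$-matrix with zero diagonal, while $B$ is automatically a circulant $(1,0)$-matrix. Then $M(A,B)$ is a symmetric $(1,0)$-matrix with zero diagonal, hence the adjacency matrix of a simple graph on $66$ vertices, and by the result of~\cite{DP06} recalled in Section~2 the additive $\FF_4$-code $C(A,B)=C_{66}$ generated by the rows of $M(A,B)+\ww I$ is self-dual of length $66$; in particular $|C_{66}|=2^{66}$.

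It then remains to pin down the minimum weight. Since $|C_{66}|=2^{66}$, exhaustive enumeration of codewords is infeasible, so the lower bound must be obtained by a more refined method. For the upper bound, the {\sc Magma} function {\tt MinimumWeight} (or a direct low-weight search) returns a codeword of weight $17$. For the matching lower bound, I would apply {\sc Magma}'s {\tt MinimumWeight}/{\tt VerifyMinimumWeightUpperBound}, which run a Brouwer--Zimmermann-type procedure — working with several systematic forms of the generator matrix $M(A,B)+\ww I$ built from nearly disjoint information sets, enumerating low-weight combinations within each window, and combining the per-window bounds — to certify that $C_{66}$ has no nonzero codeword of weight less than $17$. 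Combining the two bounds gives minimum weight exactly $17$, which is the assertion. The only real obstacle here is computational rather than mathematical: once self-duality is read off from the circulant block structure, everything hinges on the feasibility and correctness of {\tt VerifyMinimumWeightUpperBound} applied to a large additive $\FF_4$-code, and no argument beyond Section~2 is needed.
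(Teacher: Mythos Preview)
Your proposal is correct and follows essentially the same approach as the paper: construct $C_{66}=C(A,B)$ from the specified circulant data, note its self-duality via the graph construction of Section~2, and then certify the minimum weight~$17$ by the {\sc Magma} functions {\tt MinimumWeight} and {\tt VerifyMinimumWeightUpperBound}. You add a bit more explanation (checking that $r_A$ really gives a symmetric zero-diagonal circulant, and sketching the Brouwer--Zimmermann mechanism behind the {\sc Magma} routines), but the argument is the same computational verification the paper carries out.
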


Let $A_i(C)$ denote the number of codewords of weight $i$
in a self-dual additive $\FF_4$-code $C$.
By the {\sc Magma} function {\tt NumberOfWords},
we have
\begin{multline*}
A(C_{66})_{0}= 1,
A(C_{66})_{1}= \cdots = A(C_{66})_{16}=0, 
A(C_{66})_{17}= 3168,
\\
A(C_{66})_{18}= 36003,
A(C_{66})_{19}= 273174,
A(C_{66})_{20}= 1924626.
\end{multline*}

The self-dual additive $\FF_4$-code $C_{78}=C(A,B)$ is
defined as the code with generator matrix $M(A,B) +\ww I$,
where the supports $\supp(r_A)$ and $\supp(r_B)$ 
are as follows:
\begin{align*}
&
\{2,4,6,8,9,10,11,13,15,19,22,26,28,30,31,32,33,35,37,39\},
\\&
\{2,4,6,8,9,15,17,18,19,21,25,26,27,28,29,30,32,33,36,37\},
\end{align*}
respectively.
We verified that $C_{78}$ has minimum weight $19$.
This computation was done by 
the {\sc Magma} function {\tt MinimumWeight}.
We also verified that $C_{78}$ has no codeword of weight less than $19$,
by using the {\sc Magma} function {\tt VerifyMinimumWeightUpperBound}.
Hence, we have the following:

\begin{prop}\label{prop:78}
There is a self-dual additive $\FF_4$-code of length
$78$ and minimum weight $19$.
\end{prop}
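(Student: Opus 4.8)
The plan is to verify that the code $C_{78}=C(A,B)$ defined by the supports $\supp(r_A)$ and $\supp(r_B)$ exhibited just above the statement has the two asserted properties: it is self-dual, and its minimum weight is $19$. Self-duality requires essentially no computation. The matrix $M(A,B)$ has the block shape~\eqref{eq:GM} with $A$ a symmetric circulant $(1,0)$-matrix with zero diagonal and $B$ a circulant $(1,0)$-matrix, so $M(A,B)^T=M(A,B)$ and every diagonal entry of $M(A,B)$ vanishes. Hence $M(A,B)$ is the adjacency matrix of a simple graph $\Gamma$ on $78$ vertices, and by the result of~\cite{DP06} recalled in Section~2 the additive $\FF_4$-code generated by the rows of $M(A,B)+\ww I$ is automatically self-dual; thus $C_{78}$ is a self-dual additive $(78,2^{78})$ $\FF_4$-code.

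It then remains to show that the minimum weight of $C_{78}$ equals $19$, which I would split into an upper bound and a lower bound. For the upper bound it suffices to exhibit a single nonzero codeword of weight $19$; such a word is obtained as a suitable $\FF_4$-combination of the rows of $M(A,B)+\ww I$, and the circulant structure of $A$ and $B$ (hence the cyclic-shift automorphism of $C_{78}$) keeps this search small. For the lower bound one must certify that $C_{78}$ contains no nonzero codeword of weight $\le 18$. A direct enumeration of all $2^{78}$ codewords is infeasible, so instead I would run a Brouwer--Zimmermann-type minimum-weight computation; concretely, this is exactly what the {\sc Magma} functions {\tt MinimumWeight} and {\tt VerifyMinimumWeightUpperBound} carry out, the former returning the value $19$ and the latter confirming that no lighter codeword exists.

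The main obstacle is therefore computational rather than structural: one needs the minimum-weight algorithm to terminate in a reasonable time on a self-dual additive code of length $78$, where a naive enumeration would have to sift through astronomically many candidates. This is feasible here precisely because $M(A,B)+\ww I$ is assembled from two $39\times 39$ circulant blocks, so many information sets — and the reduced generator matrices obtained from them — are related by cyclic shifts, which both shrinks the effective search space and lets enumerations be reused. Once {\sc Magma} confirms that $C_{78}$ has minimum weight exactly $19$, the proposition follows immediately; as a byproduct one may also record the numbers $A_i(C_{78})$ of codewords of small weight $i$, exactly as was done above for $C_{66}$.
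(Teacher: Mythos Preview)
Your proposal is correct and follows essentially the same route as the paper: self-duality of $C_{78}$ is automatic from the graph construction of~\cite{DP06}, and the minimum weight $19$ is then certified computationally via the {\sc Magma} functions {\tt MinimumWeight} and {\tt VerifyMinimumWeightUpperBound}. One small slip worth fixing: codewords of an \emph{additive} $\FF_4$-code are $\FF_2$-linear (not $\FF_4$-linear) combinations of the generator rows, so your weight-$19$ witness should be sought among $\FF_2$-combinations.
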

By the {\sc Magma} function {\tt NumberOfWords},
we have
\begin{multline*}
A(C_{78})_{0}= 1,
A(C_{78})_{1}= \cdots =A(C_{78})_{18}=0, 
\\
A(C_{78})_{19}= 2808,
A(C_{78})_{20}= 24336.
\end{multline*}

The self-dual additive $\FF_4$-code $C_{94}=C(A,B)$ is
defined as the code with generator matrix $M(A,B) +\ww I$,
where the supports $\supp(r_A)$ and $\supp(r_B)$ 
are as follows:
\begin{align*}
&
\{2,6,7,10,11,12,16,18,19,20,29,30,31,33,37,38,39,42,43,47\},
\\&
\{2,4,9,12,13,14,16,17,21,22,24,25,26,30,31,34,35,37,38,39,40,46\},
\end{align*}
respectively.
We verified that $C_{94}$ has minimum weight $21$,
by using the {\sc Magma} function {\tt MinimumWeight}.
We also verified that $C_{94}$ has no codeword of weight less than $21$,
by using the {\sc Magma} function {\tt VerifyMinimumWeightUpperBound}.
We verified that $C_{94}$ has minimum weight $21$.
Hence, we have the following:

\begin{prop}\label{prop:94}
There is a self-dual additive $\FF_4$-code of length
$94$ and minimum weight $21$.
\end{prop}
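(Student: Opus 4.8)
The proof is computational and follows exactly the pattern used for Propositions~\ref{prop:66} and~\ref{prop:78}. First I would form the $47\times 47$ symmetric circulant $(1,0)$-matrix $A$ with zero diagonal whose first row $r_A$ has support
$\{2,6,7,10,11,12,16,18,19,20,29,30,31,33,37,38,39,42,43,47\}$,
together with the $47\times 47$ circulant $(1,0)$-matrix $B$ whose first row $r_B$ has support
$\{2,4,9,12,13,14,16,17,21,22,24,25,26,30,31,34,35,37,38,39,40,46\}$.
Then $M(A,B)$, built as in~\eqref{eq:GM}, is a symmetric $(1,0)$-matrix with zero diagonal, i.e.\ the adjacency matrix of a graph $\Gamma$ on $94$ vertices, so the additive $\FF_4$-code $C_{94}=C(A,B)$ generated by the rows of $M(A,B)+\ww I$ is self-dual by the result of~\cite{DP06} quoted in Section~2. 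Hence the only thing left to verify is that the minimum weight of $C_{94}$ equals $21$.

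For the lower bound I would invoke the {\sc Magma} function {\tt VerifyMinimumWeightUpperBound} on $C_{94}$, which certifies that $C_{94}$ has no nonzero codeword of weight less than $21$; equivalently $A_i(C_{94})=0$ for $1\le i\le 20$. For the matching upper bound, {\tt MinimumWeight} produces an explicit codeword of weight exactly $21$, so the minimum weight is at most $21$. Combining the two bounds gives minimum weight exactly $21$, and together with the self-duality established above this proves the proposition.

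The one real obstacle is the size of the search: a length-$94$ additive $\FF_4$-code has $2^{47}$ codewords, so direct enumeration is hopeless, and one must rely on the Brouwer--Zimmermann-style minimum-weight algorithm applied to an $\FF_2$-linear model of the additive code (as implemented in {\sc Magma}), plus its upper-bound-certification companion; in practice this is the step that costs the computing time. I would not attempt to compute the full weight distribution of $C_{94}$, since, unlike the extra data recorded for $C_{66}$ and $C_{78}$, it is not needed for the statement and is considerably more expensive.
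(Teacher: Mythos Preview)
Your proposal is correct and follows exactly the paper's approach: define $C_{94}=C(A,B)$ from the stated circulant data, invoke self-duality via~\cite{DP06}, and then certify minimum weight $21$ with the {\sc Magma} functions {\tt MinimumWeight} and {\tt VerifyMinimumWeightUpperBound}; the paper likewise omits any further weight data for $C_{94}$. One small slip in your closing remark: a self-dual additive $\FF_4$-code of length $94$ has $2^{94}$ codewords, not $2^{47}$ (the $47$ is the block size of the circulants, not the $\FF_2$-dimension of the code).
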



Finally, by the method in~\cite{CRSS}, 
Propositions~\ref{prop:66}, \ref{prop:78} and \ref{prop:94}
yield Theorem~\ref{thm}.
The quantum $[[n,0,d]]$ codes described in Theorem~\ref{thm}
improve the previously known lower bounds on 
$d_{\max}(n,0)$ ($n=66,78$ and $94$).
More precisely, we give
our present state of knowledge about 
$d_{\max}(n,0)$:
\begin{align*}
& 17 \le d_{\max}(66,0) \le 24, \\
&  19 \le d_{\max}(78,0) \le 28, \\
& 21 \le d_{\max}(94,0) \le 32.
\end{align*}

\bigskip
\noindent
{\bf Acknowledgment.}
This work was supported by JSPS KAKENHI Grant Number 15H03633.




\begin{thebibliography}{99}
\bibitem{Magma}W. Bosma, J. Cannon and C. Playoust, 
The Magma algebra system I: The user language, 
{\sl J. Symbolic Comput.}
{\bf 24} (1997), 235--265.

\bibitem{CRSS}A.R. Calderbank, E.R. Rains, P.W. Shor and N.J.A. Sloane,
Quantum error correction via codes over ${\rm GF}(4)$,
{\sl IEEE Trans.\ Inform.\ Theory}
{\bf 44}  (1998),  1369--1387.

\bibitem{DP06}L.E. Danielsen and M.G. Parker,
On the classification of all self-dual additive codes
over ${\rm GF}(4)$ of length up to 12,
{\sl J. Combin.\ Theory Ser.~A}
{\bf 113} (2006), 1351--1367.

\bibitem{Grassl} M. Grassl,
Bounds on the minimum distance of linear codes and quantum codes,
Online available at \url{http://www.codetables.de},
Accessed on 2017-04-14.


\bibitem{GH} M. Grassl and M. Harada,
New self-dual additive $\FF_4$-codes constructed from circulant graphs, 
{\sl Discrete Math.}
{\bf 340} (2017), 399--403.

\bibitem{GK}T.A. Gulliver and J.-L. Kim, 
Circulant based extremal additive self-dual codes over GF$(4)$,
{\sl IEEE Trans.\ Inform.\ Theory}
{\bf 50} (2004), 359--366.

\bibitem{Var}Z. Varbanov, 
Additive circulant graph codes over GF($4$),
{\sl Math.\ Maced.}
{\bf 6}  (2008), 73--79. 

\end{thebibliography}
\end{document}